\newcommand{\parentheses}[4][]%
{\ifthenelse{\equal{#1}{}}{\left#2}{\csname#1\endcsname#2}%
#4\ifthenelse{\equal{#1}{}}{\right#3}{\csname#1\endcsname#3}}
\newcommand{\ceiling}[2][]{\parentheses[#1]{\lceil}{\rceil}{#2}}
\let\Im=\relax
\DeclareMathOperator{\Im}{Im}
\let\Re=\relax
\DeclareMathOperator{\Re}{Re}
\renewcommand{\MR}[1]{}
\newtheorem{theorem}{Theorem}
\newtheorem{lemma}{Lemma}[section]
\DeclareMathOperator{\R}{\mathbb{R}}
\DeclareMathOperator{\Z}{\mathbb{Z}}
\title[$p$-adic valuation of the number of alternating sign
matrices]{A precise description of the $\boldsymbol{p}$-adic valuation of the
  number of alternating sign matrices}
\author{Clemens Heuberger}
\address{Institut f\"ur Mathematik B\\Technische Universit\"at Graz\\Austria}
\email{clemens.heuberger@tugraz.at}
\thanks{This paper was written while C.~Heuberger 
  was a visitor at the
  Center of Experimental Mathematics at the University of Stellenbosch and
  while he was a visitor at the Institute of Mathematics at the University of
  Debrecen supported by the Action Austria-Hungary (No. 75\"ou1). He
  thanks both institutions
 for their hospitality. He is also supported
by the Austrian
Science Foundation FWF, project S9606, that is part of the
Austrian National Research Network ``Analytic Combinatorics
and Probabilistic Number Theory.''}
\author{Helmut Prodinger}
\address{Department of Mathematics\\University of Stellenbosch\\South Africa}
\email{hproding@sun.ac.za}
\thanks{H.~Prodinger is supported by the NRF grant 2053748 
of the South African National Research Foundation and by
the Center of Experimental Mathematics of the University of Stellenbosch.}
\subjclass[2010]{11A63; 
05A15 
11B75 
11K16 
11Y55
}
\keywords{Alternating sign matrices; $p$-adic valuation; Sum-of-digis;
  Mellin-Perron formula; Periodic fluctuation; Fourier coefficients; Asymptotic
expansion}
\begin{document}

\begin{abstract} Following Sun and Moll~\cite{Sun-Moll:2009:p-adic}, we study $v_p(T(N))$, the $p$-adic valuation of the counting function of the alternating sign matrices. We find an exact analytic expression for it that exhibits
the fluctuating behaviour, by means of Fourier coefficients. The method
is the Mellin-Perron technique, which is familiar in the analysis of 
the sum-of-digits function and related quantities.
\end{abstract}

\maketitle


\section{Introduction}

Sun and Moll~\cite{Sun-Moll:2009:p-adic} consider the counting function
\begin{equation*}
	T(N)=\prod_{j=0}^{N-1}\frac{(3j+1)!}{(N+j)!},
\end{equation*}
which is famous because of the enumeration of the \emph{Alternating Sign Matrices.} Their paper provides also some historic remarks about the fascinating story of this subject.

Sun and Moll write:
\begin{quotation}\slshape
Given an interesting sequence of integers, it is a natural question to explore the
structure of their factorization into primes. This is measured by the $p$-adic valuation
of the elements of the sequence.
\end{quotation}

Indeed, in~\cite{Sun-Moll:2009:p-adic} the $p$-adic valuation $v_p(T(N))$ is studied, i.e., $v_p(m)$ denotes the maximum $k$ such that $p^k$ divides $m$. 

Most results are, however, for $p=2$ only. 

Their key result is (Corollary 2.2):
\begin{equation*}
v_p(T(N))=\frac1{p-1}\biggl(\sum_{j=0}^{N-1}S_p(N+j)-\sum_{j=0}^{N-1}S_p(3j+1)\biggr),
\end{equation*}
where $S_p(k)$ is the sum-of-digits function of $k$ to the base $p$.

We use an analytic approach (the Mellin-Perron technique and its extension, \cite{Flajolet-Grabner-Kirschenhofer-Prodinger:1994:mellin, Grabner-Kirschenhofer-Prodinger:1998})
to derive an \emph{exact} analytic expression for this function, and this works for \emph{all} primes. The periodicities are made fully explicit by computing the relevant Fourier coefficients. 

\section{Results}

Now we study the function $v_p(T(N))$ more closely. The prime $p$ is fixed
throughout this paper, everything may depend on it, including implicit
constants in the $O$-notation.

We will prove that asymptotically,
\begin{equation}\label{eq:result-summary}
  v_p(T(N))=N \log_p\frac2{\sqrt3}+N\Phi(\log_p N)+O(\sqrt N),
\end{equation}
where
\begin{equation*}
  \Phi(x)=\sum_{k\in\Z\setminus\{0\}}c_k e^{2k \pi i x}
\end{equation*}
is a $1$-periodic and continuous function of mean $0$ with Fourier coefficients
\begin{equation*}
  c_k=\frac{(1-2^{1+\chi_k}+3^{\chi_k})\zeta(\chi_k)}{\chi_k(1+\chi_k)\log
    p}\qquad\text{with}\qquad
  \chi_k=\frac{2k\pi i}{\log p}
\end{equation*}
for $k\in\Z\setminus\{0\}$. 

The $O$-term in \eqref{eq:result-summary} can also be computed explicitly. Here, the cases
$p\equiv 1\pmod 3$, $p\equiv-1\pmod 3$ and $p=3$ differ in several aspects,
thus we formulate one theorem for each
case: Theorems~\ref{theorem:result-case-1}, \ref{theorem:result-case-2} and
\ref{theorem:p-3}, respectively.

The occurring quantities are expressed in terms of the Hurwitz $\zeta$ function defined by
\begin{equation*}
  \zeta(s,\alpha)=\sum_{n>-\alpha}\frac1{(n+\alpha)^s},
\end{equation*}
where we allow arbitrary $\alpha\in\R$. For $0<\alpha\le 1$, this coincides
with the usual definition. In our version, however, it is $1$-periodic in the
second variable, which will be useful for our purposes. The special case
$\alpha\in\Z$ corresponds to the Riemann $\zeta$ function $\zeta(s)$.

\begin{theorem}\label{theorem:result-case-1} Assume that $p\equiv 1\pmod 3$.
  Then the asymptotic expansion (which is also exact) of the function $v_p(T(N))$ is given by
  \begin{equation*}
    v_p(T(N))=N \log_p\frac2{\sqrt3}+N\Phi(\log_p N)+\Psi(N)
    +\frac19\log_p N+f_0(N).
  \end{equation*}
  Here, 
  \begin{itemize}
  \item the function
    \begin{equation*}
    \Phi(x)=\sum_{k\in\Z\setminus\{0\}}c_k e^{2k \pi i x}
  \end{equation*}
  is $1$-periodic and continuous;
\item the function 
  \begin{align*}
    \Psi(N)&=\Bigl(N+\frac13\Bigr)\psi_{1}\Bigl(\log_p\Bigl(N+\frac
    13\Bigr)\Bigr)+
    \Bigl(N-\frac13\Bigr)\psi_{-1}\Bigl(\log_p\Bigl(N-\frac
    13\Bigr)\Bigr)\\&\qquad-
    N\psi_{0}(\log_p N),
  \end{align*}
  is expressed in terms of the continuous $1$-periodic functions
  \begin{equation*}
    \psi_{j}(x)=\sum_{k\in\Z\setminus\{0\}}d_{k,j}e^{2k\pi i x},\qquad j\in\{-1,0,1\}
  \end{equation*}
  and satisfies
  \begin{equation*}
    \Psi(N)=O(N^{1/2}\log N);
  \end{equation*}
  
\item the least order term is given by
  \begin{equation*}
     f_0(N) =\frac{1}{3} \log _p\biggl(\frac{\Gamma \bigl(\frac{1}{3}\bigr)}{\Gamma
   \bigl(\frac{2}{3}\bigr)}\biggr)
+\frac{g_1(N)-g_{-1}(N)}{6}-\frac{1}{9 \log p}+\frac{p+1}{6 (p-1)}
  \end{equation*}
  with
  \begin{equation}\label{eq:g-j-Definition}
    g_j(N)=\Bigl(1+\frac{j}{3N}\Bigr)N\log_p\Bigl(1+\frac{j}{3N}\Bigr)
    \underset{N\to\infty}{\sim}\frac{j}{3 \log p}+O\Bigl(\frac{1}{N}\Bigr),
  \end{equation}
  thus $f_0(N)$ converges to
  \begin{equation*}
    \frac{1}{3} \log _p\biggl(\frac{\Gamma \bigl(\frac{1}{3}\bigr)}{\Gamma
      \bigl(\frac{2}{3}\bigr)}\biggr)
    +\frac{p+1}{6 (p-1)}
  \end{equation*}
  for $N\to\infty$.
  \end{itemize}
 
  The Fourier coefficients of $\Phi(x)$, $\psi_{\pm1}(x)$ and $\psi_{0}$
  are given by
  \begin{align*}
    c_k&=\frac{(1-2^{1+\chi_k}+3^{\chi_k})\zeta(\chi_k)}{\chi_k(1+\chi_k)\log p}&&\text{with}&
    \chi_k&=\frac{2k\pi i}{\log p},\\
    d_{k,j}&=\frac{\zeta(\chi_k,j/3)}{\chi_k(1+\chi_k)\log p}&&\text{for}&j&\in\{\pm1\},\\
    d_{k,0}&=d_{k,1}+d_{k,-1}=\frac{(3^{\chi_k}-1)\zeta(\chi_k)}{\chi_k(1+\chi_k)\log p},
  \end{align*}
  respectively.
\end{theorem}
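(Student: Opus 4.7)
The plan is to apply the Mellin-Perron summation technique (of order two) from~\cite{Flajolet-Grabner-Kirschenhofer-Prodinger:1994:mellin,Grabner-Kirschenhofer-Prodinger:1998} to the Sun-Moll identity
\begin{equation*}
v_p(T(N)) = \frac{1}{p-1}\bigl(T_1(N)-T_2(N)\bigr),\quad T_1(N)=\sum_{j=0}^{N-1}S_p(N+j),\quad T_2(N)=\sum_{j=0}^{N-1}S_p(3j+1).
\end{equation*}
First I would rewrite $T_1(N)=S(2N)-S(N)$ with $S(M)=\sum_{n=0}^{M-1}S_p(n)$, so that both sums become contour integrals of a Dirichlet series. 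For $S(M)$ this is the classical base-$p$ digit-sum series $\sum_{n\ge 1}S_p(n)/n^s$, which admits a meromorphic continuation to the whole plane with a double pole at $s=1$ and simple poles at $s=\chi_k=2k\pi i/\log p$ stemming from the denominator $1-p^{1-s}$ in its closed form. For $T_2(N)$ the AP constraint $n\equiv 1\pmod 3$ forces Hurwitz zeta to appear: since $p\equiv 1\pmod 3$ the AP modulus and the base~$p$ do not resonate, and splitting against the characters of $\Z/3\Z$ expresses the corresponding series in terms of $\zeta(s,j/3)$ for $j\in\{\pm 1\}$ multiplied by $p$-adic digit-sum factors.

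With the Mellin-Perron kernel $N^{s+1}/(s(s+1))$, I would then shift the contour to $\Re s=-\tfrac{1}{2}-\varepsilon$ and collect residues. The double pole at $s=1$ (the $\zeta(s)$ pole meeting the $1/(s-1)$ hidden in the digit-sum closed form) produces simultaneously the linear main term $N\log_p(2/\sqrt 3)$ and the logarithmic correction $(1/9)\log_p N$, the factor $\sqrt 3$ coming from the pair of Hurwitz components of $T_2$. At each simple pole $s=\chi_k$ three contributions combine: the factor $2^{1+\chi_k}-1$ arises from the identity $(2N)^{s+1}-N^{s+1}=(2^{s+1}-1)N^{s+1}$ applied to $T_1$, while the term $-3^{\chi_k}$ emerges from the collapse $\zeta(s,1/3)+\zeta(s,-1/3)=(3^s-1)\zeta(s)$ in the aligned part of $T_2$. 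The sum yields the Fourier coefficient $c_k=(1-2^{1+\chi_k}+3^{\chi_k})\zeta(\chi_k)/(\chi_k(1+\chi_k)\log p)$ of $N\Phi(\log_p N)$. The misaligned contributions, where the Hurwitz residues must be evaluated at the natural scale $3N\pm 1$ rather than $N$, produce the three pieces $(N\pm\tfrac{1}{3})\psi_{\pm 1}(\log_p(N\pm\tfrac{1}{3}))$ and $-N\psi_0(\log_p N)$ of $\Psi(N)$, with coefficients $d_{k,\pm 1}=\zeta(\chi_k,\pm 1/3)/(\chi_k(1+\chi_k)\log p)$; the identity $d_{k,0}=d_{k,1}+d_{k,-1}$ is the Fourier shadow of the same collapse formula. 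The simple pole at $s=0$ finally contributes $f_0(N)$: the $\tfrac{1}{3}\log_p(\Gamma(1/3)/\Gamma(2/3))$ term originates in the classical identity $\zeta'(0,\alpha)=\log\Gamma(\alpha)-\tfrac{1}{2}\log(2\pi)$ at $\alpha=1/3$ and $\alpha=2/3$, and the remaining constants come from the derivatives of $\zeta$ and of $(1-p^{-s})^{-1}$ at $s=0$.

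To close the argument, the residual integral on $\Re s=-\tfrac{1}{2}-\varepsilon$ is bounded by standard convexity estimates for $\zeta(s)$ and $\zeta(s,\alpha)$ on vertical strips combined with the $|s|^{-2}$ decay of $1/(s(s+1))$, and pushing the line further to the left shows that the sum of residues converges absolutely, yielding an exact expression. The bound $\Psi(N)=O(N^{1/2}\log N)$ is not automatic from the Fourier definitions of the $\psi_j$ but rests on the cancellation built into $\Psi$: a first-order Taylor expansion of $(N\pm\tfrac{1}{3})\psi_{\pm 1}(\log_p(N\pm\tfrac{1}{3}))$ around $(\log_p N,N)$ makes the leading $N\psi_j$ contributions cancel thanks to $d_{k,0}=d_{k,1}+d_{k,-1}$, and a Fourier cutoff at $|k|\le N^{1/2}$ controls the residual. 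The main obstacle I anticipate is the residue bookkeeping: the distinct analytic origins of $T_1$ (windowed base-$p$ digit sum) and $T_2$ (AP mod~$3$ digit sum) must combine so that the Fourier coefficients appear in exactly the form claimed, in particular that the prefactor $1-2^{1+\chi_k}+3^{\chi_k}$ attaches cleanly to $\zeta(\chi_k)$ at $s=\chi_k$ without cross-contamination from the $\Psi$ part, and that all boundary contributions at $s=0$ assemble correctly into $f_0(N)$.
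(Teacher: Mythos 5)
Your overall route is the same as the paper's: start from the Sun--Moll identity, treat $\sum_{j<N}S_p(N+j)=S(2N)-S(N)$ by Delange-type analysis, and attack $\sum_{j<N}S_p(3j+1)$ by second-order Mellin--Perron with Hurwitz zeta functions shifted by $\pm 1/3$, reading off $N\Phi$, $\Psi$ and $f_0$ from the residues at $s=\chi_k$, $s=0$ (note: the $\tfrac19\log_pN$ term comes from the double pole at $s=0$ via $\zeta(0,\pm\tfrac13)\neq0$ and the $(N\pm\tfrac13)^{s+1}$ prefactors, not from $s=1$ as you state). One concrete step you leave vague but which is essential is the identification of the Dirichlet series: the paper differences the summand via $S_p(m)-S_p(m-1)=1-(p-1)v_p(m)$, so that the relevant series is $\Lambda_j(s)=\sum_{n>-j/3}v_p(3n+j)(n+j/3)^{-s}=\bigl(\zeta(s,\tfrac j3)+p^s\zeta(s,\tfrac{uj}3)\bigr)/(p^{2s}-1)$ with $p\equiv u\pmod 3$; your ``Hurwitz zeta multiplied by $p$-adic digit-sum factors'' would need to be pinned down to something of this form before any residue bookkeeping can start.

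The genuine gap is your treatment of the remainder integral, which is exactly where the theorem's claim of an \emph{exact} (not merely asymptotic) expansion lives. First, on the line $\Re s=-\tfrac12-\varepsilon$ the standard estimate $\zeta(\sigma+it,\alpha)\ll|t|^{1/2-\sigma}\log|t|$ gives an integrand of size $|t|^{-1+\varepsilon}\log|t|$ against the kernel $1/(s(s+1))$, so the integral does not converge absolutely there; the paper stops at $\Re s=-\tfrac14$ for this reason. Second, ``pushing the line further to the left'' fails outright, since $\zeta(s,\alpha)$ grows like $\Gamma(1-s)$ as $\Re s\to-\infty$ and there are no further poles to collect; and on any admissible fixed line the trivial bound for the remainder is $O(N^{3/4})$, which swallows the $\tfrac19\log_pN$ term and all of $f_0(N)$, so your argument proves only the leading terms of the theorem. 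The paper's resolution is to expand $1/(p^{2s}-1)=-\sum_{\ell\ge0}p^{2\ell s}$ on the left line, shift each term \emph{back} to $\Re s=2$, and recognize each shifted integral as an exact Mellin--Perron sum over an arithmetic progression (here the hypothesis $3\mid p^m-v$ is used); this evaluates the remainder exactly as $\tfrac{2p}{9(p-1)}$, a constant that is needed to produce the value $\tfrac{p+1}{6(p-1)}$ appearing in $f_0(N)$. Without this (or an equivalent exact evaluation), the stated theorem cannot be reached.
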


The case $p\equiv -1\pmod 2$ is similar, but the structure of $\Psi(x)$ is
more complicated: it is now composed of $2$-periodic functions instead of
$1$-periodic functions. On the other hand, the least order term is less
complicated.

\begin{theorem}\label{theorem:result-case-2}
  Assume that $p\equiv -1\pmod 3$.
  Then the asymptotic expansion (which is also exact) of the function $v_p(T(N))$ is given by
  \begin{equation*}
     v_p(T(N))=N \log_p\frac2{\sqrt3}+N\Phi(\log_p N)+\Psi(N)+\frac{p+1}{6 (p-1)}.
  \end{equation*}
  Here, 
  \begin{itemize}
  \item the function
    \begin{equation*}
    \Phi(x)=\sum_{k\in\Z\setminus\{0\}}c_k e^{2k \pi i x}
  \end{equation*}
  is $1$-periodic and continuous;
\item the function 
  \begin{align*}
    \Psi(N)&=\Bigl(N+\frac13\Bigr)\psi_{1}\Bigl(\log_p\Bigl(N+\frac
    13\Bigr)\Bigr)+
    \Bigl(N-\frac13\Bigr)\psi_{-1}\Bigl(\log_p\Bigl(N-\frac
    13\Bigr)\Bigr)\\&\qquad-
    N\psi_{0}(\log_p N),
  \end{align*}
  is expressed in terms of the continuous $2$-periodic functions
  \begin{equation*}
    \psi_{j}(x)=\sum_{k\in\Z\setminus\{0\}}d_{k,j}e^{k\pi i x},\qquad j\in\{-1,0,1\}
  \end{equation*}
  and satisfies
  \begin{equation*}
    \Psi(N)=O(N^{1/2}\log N).
  \end{equation*}
  
  \end{itemize}

  The Fourier coefficients of $\Phi(x)$, $\psi_{\pm1}(x)$ and $\psi_{0}$
  are given by
  \begin{align*}
    c_k&=\frac{(1-2^{1+\chi_k}+3^{\chi_k})\zeta(\chi_k)}{\chi_k(1+\chi_k)\log p}&&\text{with}&
    \chi_k&=\frac{2k\pi i}{\log p},\\
    d_{k,j}&=\frac{\zeta(\chi_{k/2},j/3)+(-1)^k\zeta(\chi_{k/2},-j/3)}{2\chi_{k/2}(1+\chi_{k/2})\log p}&&\text{for}&j&\in\{\pm1\},\\
    d_{k,0}&=d_{k,1}+d_{k,-1}=
    \begin{cases}
      \frac{(3^{\chi_{k/2}}-1)\zeta(\chi_{k/2})}{\chi_{k/2}(1+\chi_{k/2})\log
        p},&\text{ if $k$ is even},\\
      0,&\text{ if $k$ is odd},
    \end{cases}
  \end{align*}
  respectively.
\end{theorem}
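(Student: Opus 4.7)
The proof follows the Mellin--Perron strategy employed for
Theorem~\ref{theorem:result-case-1}, with the entire novelty located in the
analysis of the Dirichlet series attached to the sum
$\sum_{j=0}^{N-1}S_p(3j+1)$. We start from the Sun--Moll identity
\begin{equation*}
  v_p(T(N))=\frac{1}{p-1}\Bigl(\sum_{j=0}^{N-1}S_p(N+j)-\sum_{j=0}^{N-1}S_p(3j+1)\Bigr)
\end{equation*}
and represent each partial sum as a contour integral of a suitable Dirichlet
series against a kernel of the form $N^{s+1}/(s(s+1))$ via the
Mellin--Perron formula. The contour is then shifted to the left while the
poles encountered are collected as residues. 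The first sum is insensitive to
the residue of $p$ modulo~$3$ and contributes, exactly as in
Theorem~\ref{theorem:result-case-1}, the linear term $N\log_p(2/\sqrt3)$,
the fluctuating term $N\Phi(\log_p N)$ with Fourier coefficients $c_k$, and
the constant $\frac{p+1}{6(p-1)}$.

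The new feature comes from the second sum. For $p\equiv 1\pmod 3$, the digit
recurrence $S_p(pn+r)=S_p(n)+r$ preserves the progression
$\{3j+1\}_{j\ge0}$, so the Dirichlet series for $S_p$ on that progression
satisfies a single functional equation in the variable $p^s$ whose imaginary
poles sit at $\chi_k=2k\pi i/\log p$. For $p\equiv-1\pmod3$, multiplication
by $p$ interchanges the residue classes $1$ and $2$ modulo~$3$, so the
functional equation couples $S_p$ on $\{3j+1\}$ with $S_p$ on $\{3j+2\}$
into a $2\times 2$ system that only closes up in the variable $p^{2s}$.
Diagonalising this system into its $\pm1$-eigenspaces replaces
$\zeta(\chi_k,j/3)$ by the symmetric and antisymmetric combinations
$\zeta(\chi_{k/2},j/3)\pm\zeta(\chi_{k/2},-j/3)$, moves the imaginary poles
from $\chi_k$ to $\chi_{k/2}=k\pi i/\log p$, and doubles the period of the
fluctuating parts from $1$ to $2$. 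The sign $(-1)^k$ in the formula for
$d_{k,j}$ records which eigenspace contributes at Fourier index~$k$.

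With the Dirichlet series in hand the contour shift proceeds as in
\cite{Flajolet-Grabner-Kirschenhofer-Prodinger:1994:mellin,
Grabner-Kirschenhofer-Prodinger:1998}: the pole at $s=2$ is absorbed into
the leading linear term, the simple poles at $s=1$ coming from the two
progressions $\{3j\pm1\}$ combine into the auxiliary function $\Psi(N)$ with
the stated coefficients $d_{k,j}$, and the chain of simple poles at
$s=\chi_{k/2}$ yields the Fourier series of the $2$-periodic functions
$\psi_j$. A polynomial bound for $\zeta(s,\alpha)$ on vertical lines,
together with the decay of $1/(s(s+1))$, keeps the residual integral on
$\Re s=1/2$ bounded by $O(N^{1/2}\log N)$. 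Crucially, the double pole at
$s=1$ responsible for the $\frac19\log_p N$ term and the $\Gamma$-ratio of
Theorem~\ref{theorem:result-case-1} is now split by the eigenvalue
decomposition into two simple poles at $s=\chi_{k/2}$ with $k$ odd, so
neither a logarithm nor a $\Gamma$-ratio survives and the remaining
constants collapse to the single term $\frac{p+1}{6(p-1)}$.

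The main obstacle lies in the spectral analysis of the second paragraph:
one must verify that the $2\times 2$ functional equation really closes,
that the resulting eigen-Dirichlet series extend meromorphically to the
whole plane with poles exactly at the stated locations, and that the
residue calculation reproduces the precise coefficients $d_{k,j}$ of the
theorem. Every remaining step is a direct adaptation of the
$p\equiv1\pmod3$ argument.
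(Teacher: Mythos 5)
Your overall strategy matches the paper's: Sun--Moll plus Mellin--Perron, with the essential new observation that for $p\equiv-1\pmod 3$ multiplication by $p$ swaps the residue classes $\pm1$ modulo $3$, so the Dirichlet series attached to $S_p$ on the progression $3j+1$ only closes up in $p^{2s}$; this is exactly why the paper's $\Lambda_j(s)=\bigl(\zeta(s,j/3)+p^s\zeta(s,uj/3)\bigr)/(p^{2s}-1)$ acquires poles at $\chi_{k/2}=k\pi i/\log p$ for odd $k$ and the fluctuations become $2$-periodic. So the key idea is present. However, several of your intermediate steps are wrong as stated and would not deliver the theorem.

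First, your contour bookkeeping is inconsistent: you claim to collect the residues at $s=\chi_{k/2}$ (which lie on $\Re s=0$) and yet leave the residual integral on $\Re s=1/2$; you cannot do both. The paper shifts to $\Re s=-1/4$, and --- since the theorem asserts an \emph{exact} identity with constant $\tfrac{p+1}{6(p-1)}$ --- the remainder integral cannot be left as an $O(N^{1/2}\log N)$ error: it must be (and in the paper is) evaluated in closed form, equal to $\tfrac{2p}{9(p-1)}$, by expanding $1/(p^{2s}-1)$ as a geometric series and running Mellin--Perron backwards. Your proposal gives no mechanism for this, and without it the claimed exactness and the value of the constant are unproved. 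Second, the attributions of the various terms are wrong: $\Psi(N)$ does not come from ``simple poles at $s=1$'' but from the correction $(N+j/3)^{1+\chi_{k/2}}-N^{1+\chi_{k/2}}$ at the imaginary poles (the pole at $s=1$ produces the $N^2$ term that cancels against the Abel-summation correction); the disappearance of the $\tfrac19\log_p N$ term and of the $\Gamma$-ratio is governed by the residue at $s=0$, where for $u=-1$ the numerator combination $\zeta(0,j/3)+\zeta(0,-j/3)$ vanishes and kills the would-be double pole --- not by a ``double pole at $s=1$ splitting into poles at $\chi_{k/2}$ with $k$ odd''. Third, the first sum alone does not produce $N\Phi(\log_p N)$ with the stated $c_k$ nor the full constant: the $3^{\chi_k}$ part of $c_k$ and a portion of $\tfrac{p+1}{6(p-1)}$ come from the progression sum. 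These are not cosmetic slips; each one is a place where the computation, carried out as you describe, would give the wrong term or fail to establish exactness.
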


Finally, for $p=3$, the result is that the $O$-term in
\eqref{eq:result-summary} can be omitted:

\begin{theorem}\label{theorem:p-3}
  Then $v_3(T(N))$ is given by
  \begin{equation*}
    v_3(T(N))=N\Bigl(\log_3 2-\frac12\Bigr)+N\Phi(\log_3 N),
  \end{equation*}
  where 
  \begin{equation*}
    \Phi(x)=\sum_{k\in\Z\setminus\{0\}}c_ke^{2k\pi i x}
  \end{equation*}
  is a $1$-periodic continuous function with Fourier coefficients
  \begin{equation*}
    c_k=\frac{2(1-2^{\chi_k})\zeta(\chi_k)}{\chi_k(1+\chi_k)\log p}\qquad\text{with}\qquad
    \chi_k=\frac{2k\pi i}{\log p}.
  \end{equation*}
\end{theorem}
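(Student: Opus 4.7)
The plan is to exploit a base-$3$ digit identity that completely collapses one of the two sums in Sun and Moll's identity; this is the source of the simplification over the cases $p\neq 3$. The first step is to observe that for every $j\geq 0$ the integer $3j$ ends in the digit $0$ in base~$3$, so $S_3(3j+1)=S_3(j)+1$. Writing $A(N):=\sum_{k=0}^{N-1}S_3(k)$, this yields
\begin{equation*}
  \sum_{j=0}^{N-1} S_3(3j+1) = A(N) + N, \qquad \sum_{j=0}^{N-1} S_3(N+j) = A(2N)-A(N),
\end{equation*}
and substituting into the identity of Sun and Moll recalled in the introduction gives
\begin{equation*}
  v_3(T(N)) = \tfrac12\bigl(A(2N) - 2A(N) - N\bigr).
\end{equation*}
In contrast with the cases $p\not\equiv 0\pmod 3$, no Hurwitz zeta function with shift $\pm 1/3$ ever enters the analysis here; this is the structural reason why both $\Psi(N)$ and the remainder $O(\sqrt N)$ of~\eqref{eq:result-summary} are absent.

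The second step is to apply the Mellin--Perron machinery of~\cite{Flajolet-Grabner-Kirschenhofer-Prodinger:1994:mellin, Grabner-Kirschenhofer-Prodinger:1998} to $A(N)$. The Dirichlet series $\sum_{n\geq 1}S_3(n)n^{-s}$ is meromorphic with a pole at $s=1$, simple poles at $s=\chi_k$ for $k\in\Z\setminus\{0\}$ contributed by the factor $1/(1-3^{-s})$, and regular behaviour at $s=0$ and $s=-1$ once the Mellin--Perron kernel $1/(s(s+1))$ is included. Shifting the contour past all these poles produces the exact representation
\begin{equation*}
  A(N) = N\log_3 N + N\,G(\log_3 N),
\end{equation*}
where $G$ is the continuous $1$-periodic function whose Fourier coefficients $b_k$ are the residues at $s=\chi_k$; a routine residue computation delivers $b_k = -2\zeta(\chi_k)/[\chi_k(1+\chi_k)\log p]$.

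Substituting this into the formula of the first step, using $\log_3(2N)=\log_3 N+\log_3 2$, the main terms $N\log_3 N$ cancel in $A(2N)-2A(N)$ and I obtain
\begin{equation*}
  v_3(T(N)) = N\bigl(\log_3 2-\tfrac12\bigr) + N\Phi(\log_3 N),\qquad \Phi(x):=G(x+\log_3 2)-G(x).
\end{equation*}
Comparing Fourier series gives $c_k = (2^{\chi_k}-1)b_k = 2(1-2^{\chi_k})\zeta(\chi_k)/[\chi_k(1+\chi_k)\log p]$, which is the asserted formula.

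The hard part will be the rigorous justification that the Mellin--Perron contour integral really vanishes upon shifting all the way to the left; this requires the standard convexity bounds for $\zeta(s)$ on vertical lines together with the decay supplied by $1/(s(s+1))$, and is precisely the technical ingredient developed in the cited references. Uniform convergence of the Fourier series for $\Phi$ to a continuous function then follows from the bound $|c_k|=O(|k|^{-3/2})$, which combines the $O(1/k^2)$ from the denominator $\chi_k(1+\chi_k)$ with the convexity estimate $|\zeta(\chi_k)|=O(|k|^{1/2})$.
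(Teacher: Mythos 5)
Your argument is essentially the paper's own: the paper disposes of the case $p=3$ in one line by noting $S_3(3j+1)=1+S_3(j)$ and then invoking Delange's formula (Lemma~\ref{lemma:Delange}) for $A(N)=\sum_{n<N}S_3(n)$, exactly as you do via the combination $v_3(T(N))=\tfrac12\bigl(A(2N)-2A(N)-N\bigr)$; the resulting telescoping of the $N\log_3N$ terms and the Fourier-coefficient computation $c_k=(2^{\chi_k}-1)b_k$ match. One inaccuracy in your write-up: the exact representation is not $A(N)=N\log_3N+NG(\log_3N)$ but $A(N)=N\log_3N+Nc_0^{(1)}+NG(\log_3N)$ with the nonzero constant $c_0^{(1)}=\frac{\log 2\pi-1}{\log 3}-1$ from Delange's theorem; relatedly, in the Mellin--Perron picture the relevant Dirichlet series is \emph{regular} at $s=1$ and it is the double pole at $s=0$ (from the kernel together with the simple pole of $1/(1-3^{-s})$ and $\zeta(0)=-\tfrac12$) that produces both the $N\log_3N$ term and the constant $c_0^{(1)}N$, contrary to your description of the pole structure. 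This error happens to be harmless for the theorem, since $c_0^{(1)}$ cancels identically in $A(2N)-2A(N)$, but the intermediate identity as you state it is false and should be corrected (or you should simply cite Delange's lemma as the paper does).
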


As examples, Figures~\ref{fig:picture-p-2}, \ref{fig:picture-p-3}  and \ref{fig:picture-p-7} show
$v_p(T(N))/N$ (gray dots) and an approximation of
$\log_p\frac2{\sqrt3}+\Phi(\log_p N)$ (black line) for $p=2$, $p=3$ and $p=7$,
respectively. For the approximation, $400$ Fourier coefficients of $\Phi$ are
used. Note that for $p=3$, the approximation is much better as the only
approximation consisted in truncating the Fourier series, in contrast to the
lower order terms present in the other cases.

\begin{figure}
  \centering
  \includegraphics{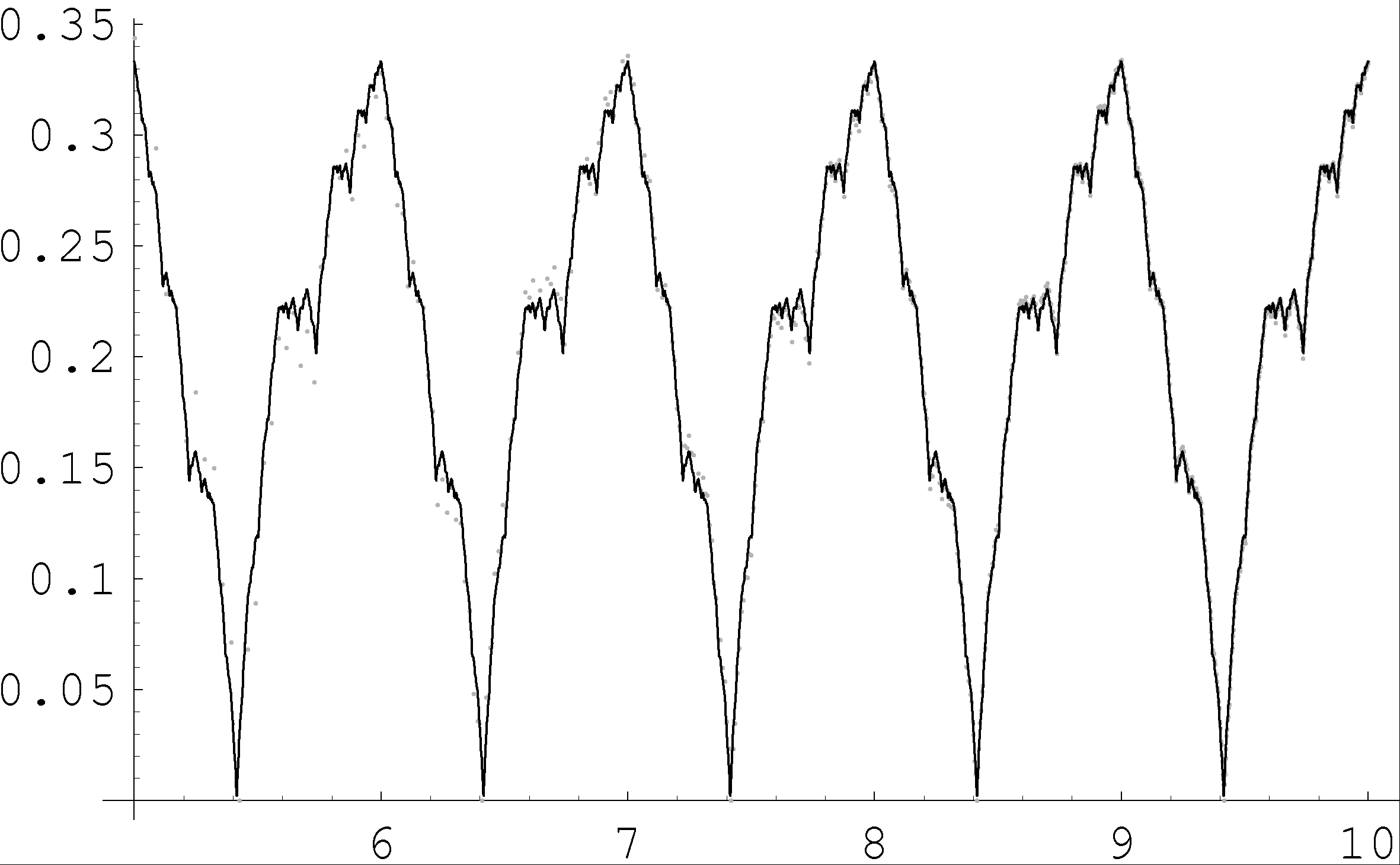}
  \caption{Periodic fluctuation for $p=2$}
  \label{fig:picture-p-2}
\end{figure}

\begin{figure}
  \centering
  \includegraphics{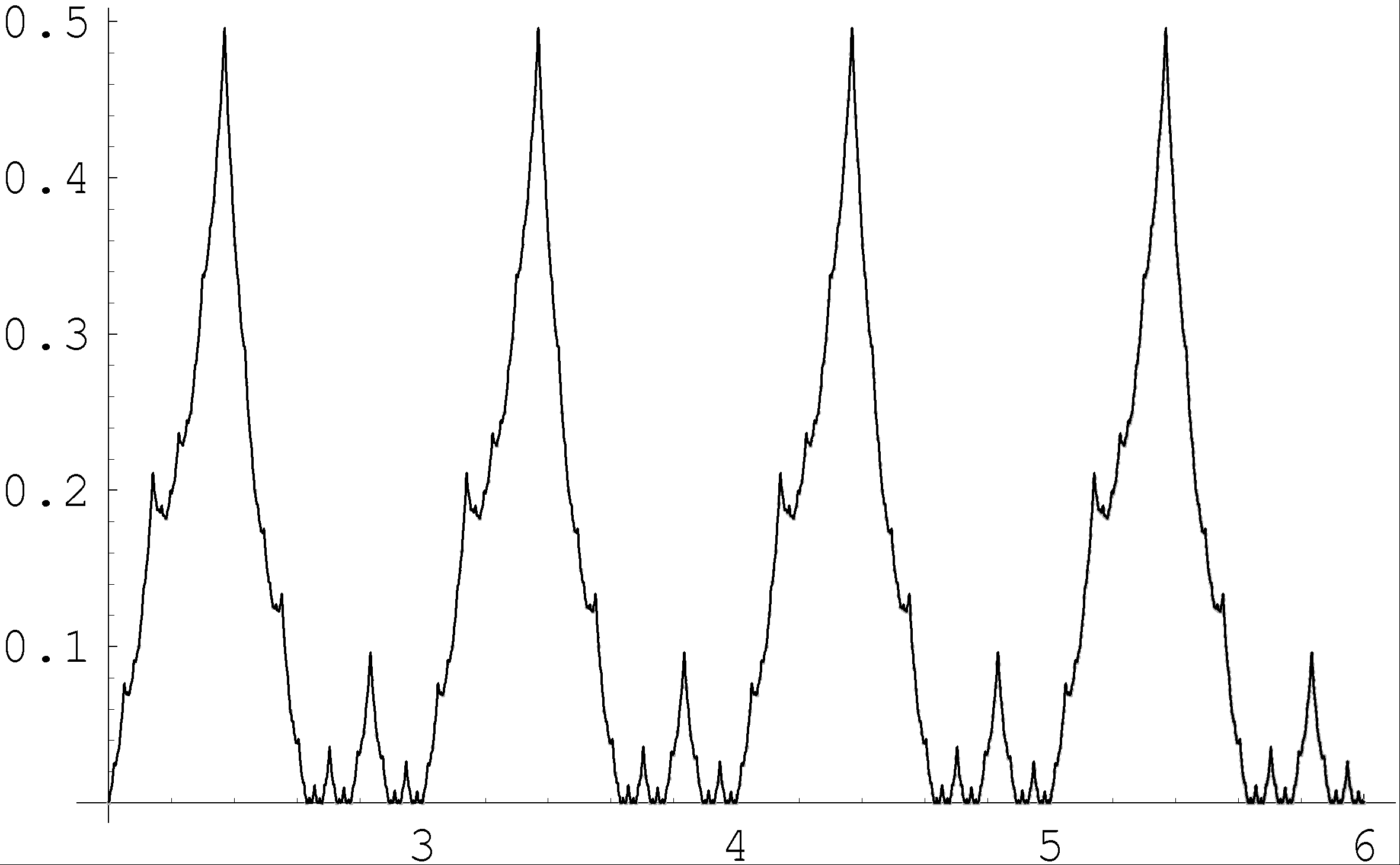}
  \caption{Periodic fluctuation for $p=3$}
  \label{fig:picture-p-3}
\end{figure}

\begin{figure}
  \centering
  \includegraphics{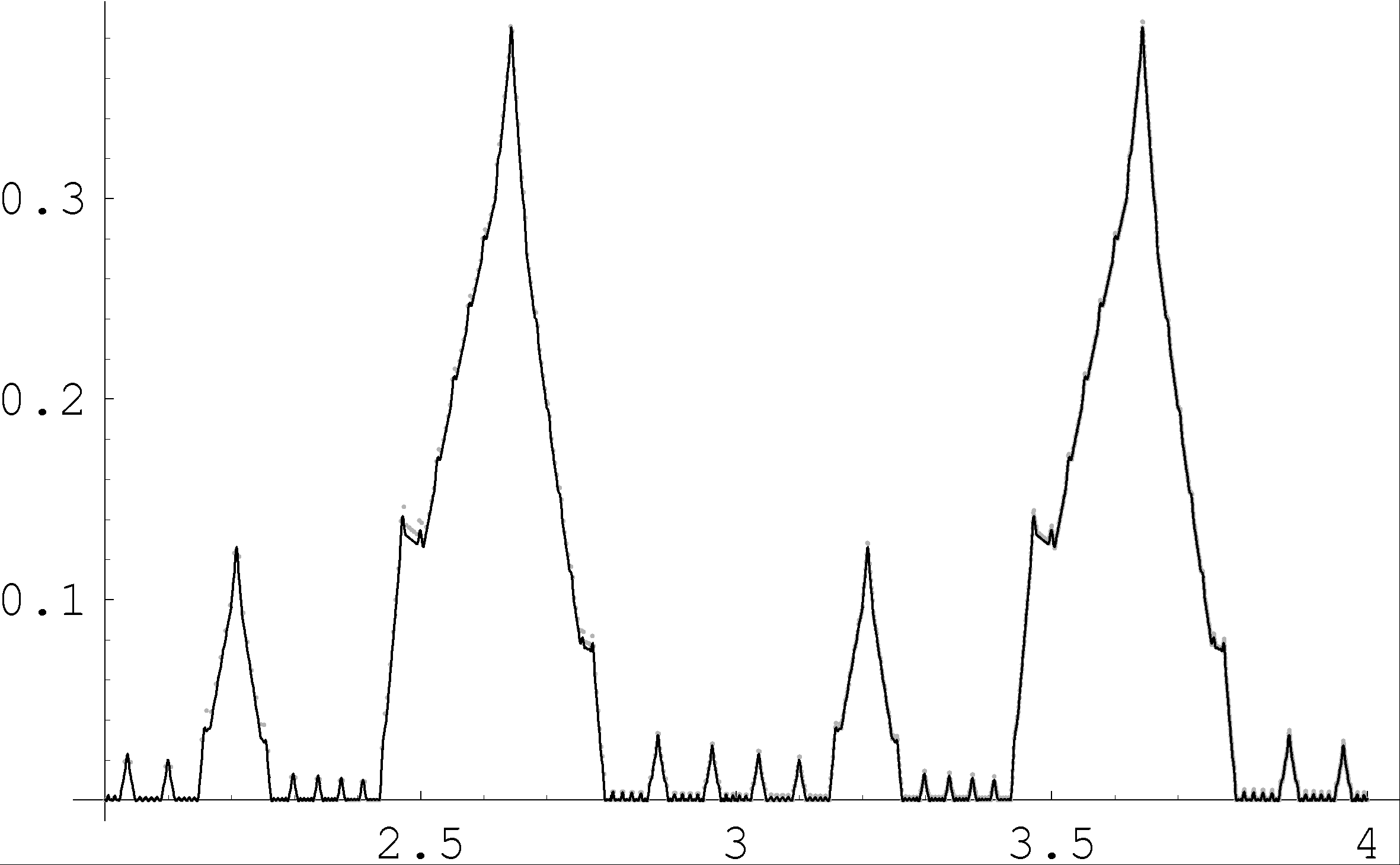}
  \caption{Periodic fluctuation for $p=7$}
  \label{fig:picture-p-7}
\end{figure}

Our analysis builds on a result of Delange~\cite{Delange:1975:chiffres}:

\begin{lemma}[Delange~\cite{Delange:1975:chiffres}]\label{lemma:Delange}
  \begin{equation*}
    \sum_{n=0}^{N-1}S_p(n)=\frac{p-1}{2}N\log_pN+Nc_0^{(1)}+N\Phi^{(1)}(\log_p N),
  \end{equation*}
  with
  \begin{equation*}
    c_0^{(1)}=\frac{p-1}{2\log p}(\log 2\pi-1)-\frac{p+1}{4},
  \end{equation*}
  a $1$-periodic continuous function of mean zero
  \begin{equation*}
    \Phi^{(1)}(x)= \sum_{k\neq0}c_k^{(1)}e^{2k\pi ix},
  \end{equation*}
  and
  \begin{equation*}
    c_k^{(1)}=-\frac{p-1}{\log p}\frac1{\chi_k(1+\chi_k)}\zeta(\chi_k),\quad \chi_k=\frac{2k\pi i}{\log p}
  \end{equation*}
  for $k\neq 0$.
\end{lemma}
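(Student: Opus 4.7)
The plan is to follow the Mellin-Perron strategy of \cite{Flajolet-Grabner-Kirschenhofer-Prodinger:1994:mellin} applied to the Dirichlet series
\[
F(s):=\sum_{n\ge1}\frac{S_p(n)}{n^s},\qquad \Re s>1.
\]
A meromorphic continuation of $F(s)$ is obtained by splitting the sum according to the least significant base-$p$ digit via the recurrence $S_p(pm+r)=S_p(m)+r$ for $0\le r<p$ and expanding $(pm+r)^{-s}$ in powers of $r/(pm)$; alternatively, starting from $S_p(n)=n-(p-1)\sum_{k\ge1}\lfloor n/p^k\rfloor$ and using the Mellin representation of the fractional part. Either route yields a relation
\[
(1-p^{1-s})F(s)=G(s),
\]
where $G(s)$ is assembled from translates of the Riemann zeta function and is meromorphic in a substantially larger half-plane. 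The simple zeros of $1-p^{1-s}$ at $s=1+\chi_k$, $\chi_k=2k\pi i/\log p$, $k\in\Z$, are therefore simple poles of $F(s)$, and a direct residue computation produces exactly the factors $\zeta(\chi_k)$ appearing in the claimed $c_k^{(1)}$.

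The second step is to apply the second-order Mellin-Perron formula
\[
\sum_{n<N}S_p(n)(N-n)=\frac{1}{2\pi i}\int_{c-i\infty}^{c+i\infty}F(s)\,\frac{N^{s+1}}{s(s+1)}\,ds,\qquad c>1,
\]
and to shift the contour to a vertical line $\Re s=-\tfrac12$ kept uniformly away from the pole sequence $1+\chi_k$. The residue at $s=1$ is a double pole (simple pole of $F$ against the simple factor $1/s$ of the kernel) and yields the main $N^{2}\log_p N$ contribution together with the constant piece $c_0^{(1)}N^{2}$; the identity $\zeta'(0)=-\tfrac12\log2\pi$ is precisely what produces the $\log 2\pi$ in $c_0^{(1)}$, while $\zeta(0)=-\tfrac12$ together with the kernel's local expansion at $s=1$ accounts for the $-1$ and the $-(p+1)/4$. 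The residues at $s=1+\chi_k$ for $k\neq 0$ sum to $N^{2}\Phi^{(1)}(\log_p N)$, and the poles $s=0$, $s=-1$ of the kernel contribute only subdominant terms.

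The third step is to undo the Ces\`aro-type smoothing. Writing $U(N):=\sum_{n<N}S_p(n)(N-n)$, one has $T(N+1):=\sum_{n\le N}S_p(n)=U(N+1)-U(N)$. Applying this forward difference to the explicit asymptotic expansion of $U(N)$ obtained above, and invoking the absolute convergence of the Fourier series for $\Phi^{(1)}$ (guaranteed by the $1/[\chi_k(1+\chi_k)]$ decay of $c_k^{(1)}$) to exchange difference and summation, produces the claimed identity.

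The main obstacle is the justification of the contour shift: one needs a polynomial growth bound for $F(s)$ along a vertical line kept uniformly away from the infinite sequence of poles $1+\chi_k$. This requires a Phragm\'en-Lindel\"of argument for $G(s)$ and then a careful transfer through the factor $1/(1-p^{1-s})$, whose behaviour near the poles must be controlled by choosing the line suitably. Once this analytic input is in place, the residue computations themselves are routine.
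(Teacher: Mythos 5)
You are attempting to prove a result that the paper itself does not prove at all: Lemma~\ref{lemma:Delange} is imported verbatim from Delange, whose own argument is elementary, and the Mellin--Perron derivation you sketch is essentially the one in \cite{Flajolet-Grabner-Kirschenhofer-Prodinger:1994:mellin}. Your roadmap would be plausible for an \emph{asymptotic} statement, but the lemma is an \emph{exact} identity (there is no error term), and two of your steps cannot deliver that. First, after shifting to $\Re s=-\tfrac12$ you dismiss everything except the residues, but the remainder integral along that line is a genuine contribution of size roughly $N^{1/2}$ to $U(N)$ that must be evaluated exactly; the paper's own Lemma~\ref{lemma:remainder} shows the standard way to do this (expand the relevant $1/(p^{s}-1)$-type factor as a geometric series, shift each term back to the right, and recognize each piece as an elementary finite sum). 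Second, and more seriously, the de-smoothing step does not work: from $U(N)=M(N)+E(N)$ with $E(N)=O(N^{1/2})$ you can only conclude $U(N+1)-U(N)=M(N+1)-M(N)+O(N^{1/2})$, since a forward difference does not shrink an error term. So even the asymptotic version you would obtain is far weaker than the lemma. Moreover, differencing the fluctuating residue terms $N^{2+\chi_k}$ term by term multiplies the $k$-th Fourier coefficient by a factor of order $|\chi_k|$ and leaves binomial-expansion remainders whose summability over $k$ you would still have to control; it is not evident that the outcome collapses to the stated $c_k^{(1)}=-\frac{p-1}{\log p}\frac{\zeta(\chi_k)}{\chi_k(1+\chi_k)}$.

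The standard repair --- which is exactly what this paper does for its own sum in Lemma~\ref{lemma:Mellin-Perron-result} --- is to apply the second-order Mellin--Perron formula not to $S_p(n)$ but to the first differences $a_n=S_p(n)-S_p(n-1)=1-(p-1)v_p(n)$. Then Abel summation gives $\sum_{n<N}(N-n)a_n=\sum_{n<N}S_p(n)$ \emph{exactly}, so no de-smoothing is needed; the associated Dirichlet series is in closed form, $\zeta(s)\bigl(1-\tfrac{p-1}{p^s-1}\bigr)$, so no functional relation $(1-p^{1-s})F(s)=G(s)$ and no Phragm\'en--Lindel\"of argument are required beyond the classical growth bounds \eqref{eq:growth-estimates-zeta}; the fluctuation-producing poles sit at $s=\chi_k$ rather than at $1+\chi_k$, and their residues produce the stated $c_k^{(1)}$ directly (the pole of $\zeta$ at $s=1$ is cancelled by the zero of $1-\tfrac{p-1}{p^s-1}$ there, which is why no $N^2$ term appears); and the remaining integral on the shifted line can be computed in closed form as above, which is what turns the asymptotic expansion into the exact identity claimed in the lemma.
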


This already leads to the simplification
\begin{multline}\label{simplified}
v_p(T(N))=\frac{1}{2}N\log_p N+N\Bigl(\log_p2+\frac{c_0^{(1)}}{p-1}\Bigr)\\
+\frac1{p-1}2N\Phi^{(1)}(\log_p2N)
-\frac1{p-1}N\Phi^{(1)}(\log_p N)+\frac1{1-p}\sum_{j=0}^{N-1}S_p(3j+1).
\end{multline}

In the rest of this paper we will analyse the sum
\begin{align*}
\frac1{1-p}\sum_{j=0}^{N-1}S_p(3j+1).
\end{align*}

For $p=3$, we have $S_3(3j+1)=1+S_3(j)$, so that another application of
Lemma~\ref{lemma:Delange} directly yields the result described in Theorem~\ref{theorem:p-3}.

In the following, we will therefore assume that $p\neq 3$.  
However, we will not follow Delange's original (elementary) approach,
but rather use the Mellin-Perron approach as described in \cite{Flajolet-Grabner-Kirschenhofer-Prodinger:1994:mellin} and
extended in \cite{Grabner-Kirschenhofer-Prodinger:1998}.

\section{Rewriting the Remaining Sum as an Integral}

The aim of this section is to prove the following lemma.

\begin{lemma}\label{lemma:Mellin-Perron-result}
  We have 
  \begin{equation}\label{eq:Mellin-Perron-result}
    \frac1{1-p}\sum_{n=0}^{N-1} S_p(3n+1)=-\frac1{1-p}\Bigl(\frac{N}{2}-\frac{3 N^2}{2}\Bigr)+I
  \end{equation}
  with
  \begin{equation*}
    I=\frac1{2\pi i}\sum_{j=-1}^1\int_{2-i\infty}^{2+i\infty}\Lambda_j(s)\Bigl(N+\frac
  j3\Bigr)^{s+1}\frac{ds}{s(s+1)}
  \end{equation*}
  and
  \begin{equation}\label{eq:lambda-expression}
    \Lambda_j(s)=\frac{\zeta\bigl(s, \tfrac j3\bigr)+p^s\zeta\bigl(s, \tfrac{uj}3\bigr)}{p^{2s}-1},
  \end{equation}
  where $u\in\{\pm 1\}$  is chosen such that $p\equiv u\pmod 3$.

\end{lemma}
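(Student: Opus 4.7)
The plan is to reduce \eqref{eq:Mellin-Perron-result} to a second-order Mellin--Perron representation of $\sum_{n=0}^{N-1} v_p\bigl((3n+1)!\bigr)$. Applying Legendre's formula $S_p(m) = m - (p-1)v_p(m!)$ to $m = 3n+1$ and summing over $n$ gives
$$\sum_{n=0}^{N-1} S_p(3n+1) = \frac{N(3N-1)}{2} - (p-1)\sum_{n=0}^{N-1} v_p\bigl((3n+1)!\bigr).$$
Dividing by $1-p$, the polynomial part is exactly $-\frac{1}{1-p}\bigl(N/2 - 3N^2/2\bigr)$, so the lemma reduces to the identity $I = \sum_{n=0}^{N-1} v_p\bigl((3n+1)!\bigr)$.

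To produce the integral, I would use the companion Legendre formula $v_p(m!) = \sum_{k\geq 1}\lfloor m/p^k\rfloor$ and swap the summations by counting pairs $(n,\ell)$ with $\ell p^k \leq 3n+1$. Setting $x = \ell p^k$, each $x\leq 3N-2$ with $p\mid x$ is counted with multiplicity $v_p(x)$, giving
$$\sum_{n=0}^{N-1} v_p\bigl((3n+1)!\bigr) = \sum_{x=1}^{3N-2} v_p(x)\bigl(N - \lceil (x-1)/3\rceil\bigr).$$
A short case analysis on $x \bmod 3$ shows that $N - \lceil (x-1)/3\rceil = (3N + j - x)/3$ exactly when $x \equiv j \pmod{3}$ with $j \in \{-1,0,1\}$. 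Enlarging the upper limit from $3N-2$ to $3N+j$ in the $j$-th inner sum appends only a single term of weight zero, giving
$$\sum_{n=0}^{N-1}v_p\bigl((3n+1)!\bigr) = \frac{1}{3}\sum_{j=-1}^{1}\sum_{\substack{y \leq 3N+j \\ y \equiv j \,(\bmod\,3)}} v_p(y)(3N+j-y).$$

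Each inner sum is now a textbook second-order Mellin--Perron sum. For the Dirichlet series $B_j(s)=\sum_{y \equiv j \,(\bmod\,3)}v_p(y)/y^s$, absolutely convergent in $\Re s>1$,
$$\sum_{\substack{y\leq M \\ y\equiv j \,(\bmod\,3)}} v_p(y)(M-y) = \frac{1}{2\pi i}\int_{2-i\infty}^{2+i\infty} B_j(s)\,\frac{M^{s+1}}{s(s+1)}\,ds,$$
and taking $M=3N+j=3(N+j/3)$ releases a factor $3^{s+1}$ which, combined with the $1/3$ prefactor, becomes $3^s$. It then remains to verify $3^s B_j(s) = \Lambda_j(s)$. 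Writing $v_p(y)=\sum_{k\geq 1}[p^k\mid y]$ and substituting $y=p^k z$, the congruence $p^k z \equiv j\pmod{3}$ becomes $z \equiv u^k j\pmod{3}$ because $u\in\{\pm 1\}$ satisfies $u^2=1$, so
$$B_j(s)=3^{-s}\sum_{k\geq 1}p^{-ks}\,\zeta(s,\,u^k j/3).$$
For $u=1$ this geometric series sums to $3^{-s}\zeta(s,j/3)/(p^s-1)$; for $u=-1$ one splits by the parity of $k$ to obtain $3^{-s}\bigl[\zeta(s,j/3)+p^s\zeta(s,-j/3)\bigr]/(p^{2s}-1)$, both matching \eqref{eq:lambda-expression}. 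The only delicate point is the swap of summation and contour integration, but this is routine on $\Re s = 2$ since $|M^{s+1}/(s(s+1))|$ decays like $|t|^{-2}$ there and $B_j(s)$ is uniformly bounded on that line.
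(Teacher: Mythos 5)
Your proof is correct, and its analytic core coincides with the paper's: both reduce the claim to the second-order Mellin--Perron formula applied to the Dirichlet series $\Lambda_j(s)=\sum_{n>-j/3}v_p(3n+j)\bigl(n+\tfrac j3\bigr)^{-s}$ (your $3^sB_j(s)$ is exactly this series after the substitution $y=3n+j$), and the closed form \eqref{eq:lambda-expression} is obtained in both cases by writing $v_p(y)=\sum_{k\ge1}[p^k\mid y]$ and summing the resulting geometric series, split over the parity of $k$ when $u=-1$. Where you genuinely diverge is in the elementary reduction to the weighted sum $\sum_j\sum_n(N-n)v_p(3n+j)$: the paper introduces $a_j(n)=\frac{S_p(3n+j)-S_p(3n+j-1)-1}{1-p}$, identifies $a_j(n)=v_p(3n+j)$ via the first-difference identity $S_p(m)-S_p(m-1)=1-(p-1)v_p(m)$, and then Abel-sums/telescopes $\sum_j\sum_n(N-n)a_j(n)+Na_1(0)$ back to $\frac1{1-p}\sum_n S_p(3n+1)$ plus the stated polynomial; you instead invoke Legendre's formula $S_p(m)=m-(p-1)v_p(m!)$ up front and recover the same weighted sum by a lattice-point count, with the residue classes modulo $3$ entering only when you sort the new summation variable $x$. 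The two reductions are equivalent (Legendre's formula is the summed form of the first-difference identity), but yours makes the origin of the polynomial $\frac N2-\frac{3N^2}2$ completely transparent (it is just $\frac1{1-p}\sum_{n<N}(3n+1)$), whereas the paper's telescoping avoids the ceiling-function bookkeeping and the range-extension step. All the minor points you flag --- extending the inner sums to $3N+j$ costs only zero-weight terms, and the interchanges are justified by absolute convergence for $\Re s>1$ --- check out.
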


\begin{proof}
We intend to use the Mellin-Perron summation formula in the version 
\begin{equation}\label{eq:Mellin-Perron}
  \sum_{\substack{n\\-\alpha<n<N}}(N-n)a(n)=\frac{1}{2\pi i}\int_{C-i\infty}^{C+i\infty} \Biggl(\sum_{\substack{n\\n>-\alpha}}\frac{a(n)}{(n+\alpha)^s}\Biggr)(N+\alpha)^{s+1}\frac{ds}{s(s+1)},
\end{equation}
where $\alpha\in\R$ and $C$ is in the half-plane of absolute convergence of the
Dirichlet series $\sum_{n>-\alpha} \frac{a(n)}{(n+\alpha)^{s}}$,
cf.~\cite[(4.3)]{Grabner-Kirschenhofer-Prodinger:1998}.
Note that we do not impose the frequently used restriction $0<\alpha\le 1$, but
we sum over $n>-\alpha$, which amounts to the same. This version, however,
smoothes the following calculations. 

For $j\in\{-1,0,1\}$, we set
\begin{equation*}
  a_j(n)=\frac{S_p(3n+j)-S_p(3n+j-1)-1}{1-p}
\end{equation*}
and 
\begin{equation*}
  \Lambda_j(s)=\sum_{n>-j/3} \frac{a_j(n)}{\left(n+\frac j3\right)^s}.
\end{equation*}
Note that $\Lambda_j(s)$ converges absolutely for $\Re s>1$. The additional
summand $-1$ in the numerator of $a_j(n)$ leads to simpler expressions for
$\Lambda_j(s)$, the denominator also simplifies the expressions and is also
present in \eqref{simplified}.

To prove \eqref{eq:Mellin-Perron-result}, we
apply \eqref{eq:Mellin-Perron} with $a(n)=a_j(n)$ and $\alpha=j/3$ for
$j\in\{-1,0,1\}$, sum up the result and use Abel summation. This yields
\begin{align*}
   I&=\sum_{n=1}^{N-1}(N-n)(a_1(n)+a_0(n)+a_{-1}(n))+ N a_1(0)\\
   &=\frac1{1-p}\sum_{n=1}^{N-1}(N-n)(S_p(3n+1)-S_p(3n-2)-3)+\frac1{1-p}N(S_p(1)-S_p(0)-1)\\
   &=\frac1{1-p}\biggl(S_p(3N-2)+\sum_{n=1}^{N-2} S_p(3n+1)
   (N-n-(N-(n+1))) \\
   &\qquad- (N-1)S_p(1)+ N S_p(1)+\frac{N}{2}-\frac{3 N^2}{2}\biggr)\\
   &=\frac1{1-p}\biggl(S_p(3N-2)+\sum_{n=1}^{N-2} S_p(3n+1)+ S_p(1)+\frac{N}{2}-\frac{3 N^2}{2}\biggr)\\
   &=\frac1{1-p}\sum_{n=0}^{N-1} S_p(3n+1)+\frac1{1-p}\Bigl(\frac{N}{2}-\frac{3 N^2}{2}\Bigr),
\end{align*}
as requested.

In order to compute $I$, we need explicit expressions for the $\Lambda_j$. 
It is well-known (and easy to see) that for a positive integer $m$, we have
\begin{equation*}
  S_p(m)-S_p(m-1)=1 - (p-1)v_p(m),
\end{equation*}
which immediately results in
\begin{equation*}
  a_j(n)=v_p(3n+j).
\end{equation*}

Thus we get
\begin{equation*}
  \Lambda_j(s)=\sum_{n>-j/3}\frac{v_p(3n+j)}{\left(n+\frac j3\right)^s}=\sum_{k\ge 1}\sum_{\substack{n>-j/3\\
      v_p(3n+j)\ge k}}\frac1{\left(n+\frac j3\right)^s}
  =\sum_{k\ge 1}\sum_{\substack{n>-j/3\\
      p^k\mid 3n+j}}\frac1{\left(n+\frac j3\right)^s}.  
\end{equation*}
By definition of $u$, we have $\frac{3n+j}{p^k}\equiv ju^k\pmod{3}$. Thus
$3n+j$ is divisible by $p^k$ if and only if there is an integer $m$
such that $3n+j=p^k(3m+ju^k)$. This results in
\begin{equation*}
  \Lambda_j(s)=\sum_{k\ge 1}\sum_{m>-ju^k/3}\frac1{p^{ks}\bigl(m+\frac {ju^k}3\bigr)^s}.
\end{equation*}
We now split the sum over $k$ according to $k$'s parity and get
\begin{equation}\label{eq:Lambda-computed}
\begin{aligned}
  \Lambda_j(s)&=\sum_{k\ge
    1}\sum_{m>-j/3}\frac1{p^{2ks}\left(m+\frac {j}3\right)^s}
  +\sum_{k\ge 1}\sum_{m>-uj/3}\frac1{p^{(2k-1)s}\left(m+\frac
    {uj}3\right)^s}\\
&=\frac{\zeta\bigl(s, \tfrac j3\bigr)+p^s\zeta\bigl(s, \tfrac{uj}3\bigr)}{p^{2s}-1},
\end{aligned}
\end{equation}
as claimed in \eqref{eq:lambda-expression}.
\end{proof}

\section{Computing the Asymptotic Main Terms}

The further strategy is now to shift the line of integration to the left.
The residues at the poles yield the main terms in the asymptotic expansion.
We prove the following lemma.

\begin{lemma}\label{lemma:residues} We have
  \begin{equation}\label{eq:residues}
  \begin{aligned}
  I&=-\frac{3 N^2}{2 (1-p)}-\frac{1}{2}N \log _pN+ N \Bigl(-\frac{1}{2} \log _p6 \pi +\frac{1}{2 \log
      p}+\frac{1}{4}\Bigr)\\
  &\qquad +N \Phi^{(2)}(\log_p N ) +\Psi(N)+f_{1}^{(2)}\log_p N
    +f_0^{(2)}\\
  &\qquad +\frac1{2\pi i}\sum_{j=-1}^1\int_{-\frac14-i\infty}^{-\frac14+i\infty}\Lambda_j(s)\Bigl(N+\frac
  j3\Bigr)^{s+1}\frac{ds}{s(s+1)},
\end{aligned}
\end{equation}
where 
\begin{itemize}
\item $\Psi(N)$ has been defined in Theorems~\ref{theorem:result-case-1} and
\ref{theorem:result-case-2}, respectively and has the properties given in these
theorems;
\item the quantities $f_{1}^{(2)}$ and $f_0^{(2)}$ depend on $p$ modulo $3$:
\begin{equation}\label{eq:f-quantities-minus}
  f_{1}^{(2)}=0,\qquad f_0^{(2)}=-\frac1{18}-\frac{1}{9 (1-p)}
\end{equation}
if $p\equiv -1\pmod 3$ and
\begin{equation}\label{eq:f-quantities-plus}
\begin{aligned}
  f_{1}^{(2)}&=\frac19,\\
   f_0^{(2)}&=\frac{1}{3} \log _p\Bigl(\frac{\Gamma \bigl(\frac{1}{3}\bigr)}{\Gamma \bigl(\frac{2}{3}\bigr)}\Bigr)
   +\frac{g_1(N)-g_{-1}(N)}{6}-\frac{1}{9 \log p}-\frac{1}{18}-\frac{1}{9 (1-p)}
\end{aligned}
\end{equation}
if $p\equiv 1\pmod 3$ with the convergent function $g_j(N)$ defined in
\eqref{eq:g-j-Definition};
\item the $1$-periodic function 
\begin{equation}\label{eq:Phi-2-definition}
  \Phi^{(2)}(x)=\sum_{k\in\Z\setminus\{0\}}c_k^{(2)}e^{2k\pi i x}
\end{equation}
with
\begin{equation*}
  c_k=\frac{3^{\chi_{k}}\zeta(\chi_{k})}{\chi_{k}(1+\chi_{k})\log
    p}\qquad\text{ for $k\neq 0$}
\end{equation*}
is continuous.
\end{itemize}

\end{lemma}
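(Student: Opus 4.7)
The plan is to shift the line of integration defining $I$ from $\Re s = 2$ to $\Re s = -1/4$ and collect the residues at every pole of the integrand in the strip $-1/4 < \Re s < 2$. Justifying the shift is routine: $\zeta(s,\alpha)$ grows at most polynomially in $|\Im s|$ in vertical strips, $1/(p^{2s}-1)$ is bounded on both vertical lines (since all its poles lie on $\Re s = 0$), and $1/(s(s+1))$ contributes $|s|^{-2}$ decay, so a standard rectangular-contour argument lets the horizontal sides vanish and reduces the lemma to a residue calculation.

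The integrand $\Lambda_j(s)(N+j/3)^{s+1}/(s(s+1))$ has a simple pole at $s=1$ coming from $\zeta(s,j/3)$ and $\zeta(s,uj/3)$, simple poles at $s_k := k\pi i/\log p$ for $k\ne 0$ coming from the zeros of $p^{2s}-1$ (possibly annihilated by the numerator of $\Lambda_j$), and a double pole at $s=0$ arising from the joint contribution of $1/s$ (in $1/(s(s+1))$) and $1/s$ (in $1/(p^{2s}-1)$). The residue at $s=1$ is elementary: the two Hurwitz zeta pieces give $\operatorname{Res}_{s=1}\Lambda_j(s) = (1+p)/(p^2-1) = 1/(p-1)$, so multiplying by $(N+j/3)^2/2$ and summing over $j\in\{-1,0,1\}$ gives $\frac{3N^2}{2(p-1)}+\frac{1}{9(p-1)}$, accounting for the leading $N^2$ term together with part of $f_0^{(2)}$. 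At each $s_k$ with $k\ne 0$ the residue of $1/(p^{2s}-1)$ equals $1/(2\log p)$; multiplying by $\zeta(s_k,j/3)+p^{s_k}\zeta(s_k,uj/3)$ and $(N+j/3)^{1+s_k}/(s_k(1+s_k))$ and summing over $j$ produces a series that, by the distribution relation $\zeta(s,1/3)+\zeta(s,2/3) = (3^s-1)\zeta(s)$, rearranges into the named pieces $N\Phi^{(2)}(\log_p N)$ and $\Psi(N)$. When $p\equiv 1\pmod 3$ the factor $1+p^{s_k}$ vanishes at odd $k$ and only the surviving poles $s=\chi_k=2k\pi i/\log p$ contribute, yielding $1$-periodic $\psi_j$; when $p\equiv -1\pmod 3$ all $k$ contribute, producing the $2$-periodic $\psi_j$ of Theorem~\ref{theorem:result-case-2}, and the same distribution relation forces $d_{k,0}=0$ at odd $k$.

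The residue at the double pole $s=0$ is the main technical step and the main obstacle. I would compute it by Laurent-expanding each factor to second order at $s=0$: $\frac{1}{p^{2s}-1}=\frac{1}{2s\log p}-\frac{1}{2}+O(s)$, $\frac{1}{s(s+1)}=\frac{1}{s}-1+s+O(s^2)$, $(N+j/3)^{s+1}=(N+j/3)\bigl(1+s\log(N+j/3)+\tfrac12 s^2\log^2(N+j/3)+\cdots\bigr)$, $p^s=1+s\log p+\cdots$, and $\zeta(s,j/3)=\zeta(0,j/3)+s\zeta'(0,j/3)+\cdots$ (using the Laurent data of $\zeta(s)$ when $j=0$), then multiplying out and reading off the coefficient of $s^{-1}$. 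For $p\equiv -1\pmod 3$ and $j=\pm 1$ the numerator value $\zeta(0,j/3)+\zeta(0,-j/3)=\tfrac16-\tfrac16=0$ cancels the simple pole of $\Lambda_j$ at $s=0$, so the integrand has only a simple pole there and no $\log_p N$ term appears, explaining $f_1^{(2)}=0$. For $p\equiv 1\pmod 3$ the corresponding numerator does not vanish and the coefficient of $\log N$ in the $j$-sum evaluates to $\tfrac13(\operatorname{Res}_{s=0}\Lambda_1(s)-\operatorname{Res}_{s=0}\Lambda_{-1}(s))=1/(9\log p)$, yielding the $\tfrac19\log_p N$ term. Splitting $\log(N+j/3)=\log N+\log(1+j/(3N))$ then isolates the vanishing correction $\frac{g_1(N)-g_{-1}(N)}{6}$, while the classical identity $\zeta'(0,a)=\log\Gamma(a)-\tfrac12\log 2\pi$ supplies the $\tfrac13\log_p(\Gamma(1/3)/\Gamma(2/3))$ constant in $f_0^{(2)}$; the remaining pure numerical constants are Laurent cross-terms. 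Assembling all residues and verifying that they match the right-hand side of~\eqref{eq:residues} completes the proof.
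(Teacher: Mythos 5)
Your contour-shift-and-residues plan is essentially the paper's own proof: the same shift to $\Re s=-\tfrac14$, the same three families of poles, the same use of the multiplication formula for the Hurwitz zeta function to assemble $N\Phi^{(2)}(\log_p N)$, the same parity analysis of $1+p^{s}$ at $s=k\pi i/\log p$ explaining the $1$- versus $2$-periodicity, and the same Laurent-expansion bookkeeping at the double pole $s=0$, including the cancellation $\zeta(0,\tfrac13)+\zeta(0,\tfrac23)=0$ that forces $f_1^{(2)}=0$ for $p\equiv-1\pmod 3$ and Lerch's formula for the $\Gamma$-constant when $p\equiv 1\pmod 3$. The residue values you quote (at $s=1$, at the zeros of $p^{2s}-1$, and the coefficient $\tfrac19$ of $\log_p N$) all check out against the paper.

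There is, however, one substantive omission. The lemma asserts, via its pointer to Theorems~\ref{theorem:result-case-1} and~\ref{theorem:result-case-2}, that the periodic functions are continuous and that $\Psi(N)=O(N^{1/2}\log N)$, and your proposal establishes neither. ``Polynomial growth of $\zeta(s,\alpha)$ in vertical strips'' suffices to justify the contour shift, but not for these claims: one needs the sharper bound $\zeta(\chi_{k/2},\alpha)=O(|k|^{1/2}\log|k|)$ on the line $\Re s=0$ (the estimate \eqref{eq:growth-estimates-zeta} with $\sigma=0$), which gives $d_{k,j}=O(|k|^{-3/2}\log|k|)$ and hence absolute, uniform convergence of the Fourier series (continuity), and which, combined with the elementary estimate $(N+j/3)^{1+\chi_{k/2}}-N^{1+\chi_{k/2}}=O(\min\{N,|k|\})$ and a split of the sum over $k$ at $|k|=N$, yields $\Psi(N)\ll N^{1/2}\log N$. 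Without this step the error term claimed in the lemma --- and propagated to the $O(\sqrt N)$ in the main theorems --- remains unproved; everything else in your outline matches the paper.
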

\begin{proof}
We shift the line of integration to $\Re s=-1/4$,

\begin{equation*}
    I=\frac1{2\pi i}\sum_{j=-1}^1\int_{-\frac14-i\infty}^{-\frac14+i\infty}\Lambda_j(s)\Bigl(N+\frac
  j3\Bigr)^{s+1}\frac{ds}{s(s+1)}+\sum_{s\text{ poles}}r_s,
\end{equation*}
where $r_s$ denotes the residue of the integrand in $s$.
Here, the usual growth estimates for the $\zeta$ function on vertical lines
(cf. \cite[\S~13.51]{Whittaker-Watson:1969})
\begin{equation}\label{eq:growth-estimates-zeta}
  \zeta(s,\alpha)=O(|t|^{\tau(\sigma)}\log|t|) \qquad\text{with}\qquad
  \tau(\sigma)=\begin{cases}
    0&1\le \sigma,\\
    1-\sigma&1/2\le \sigma\le 1,\\
    1/2&0\le \sigma\le 1/2,\\
    1/2-\sigma&\sigma\le 0
    \end{cases}
\end{equation}
have been used, where $\sigma=\Re s$ and $t=\Im s$, as usual. 

The residue at $s=1$ equals
\begin{equation*}
  r_1=-\frac{3 N^2}{2 (1-p)}-\frac{1}{9 (1-p)}.
\end{equation*}
The residue at $s=0$ is calculated as

\begin{equation*}
  r_0=-\frac{1}{2}N \log _pN+ N \Bigl(-\frac{1}{2} \log _p6 \pi +\frac{1}{2 \log
      p}+\frac{1}{4}\Bigr)+f_{1}^{(2)}\log_p N +f_0^{(2)}+\frac{1}{9 (1-p)},
\end{equation*}
with the quantities $f_{1}^{(2)}$ and $f_0^{(2)}$ defined in
\eqref{eq:f-quantities-minus} and \eqref{eq:f-quantities-plus}.

Finally, there is a simple pole of the integrand at $s=\chi_{k/2}=k\pi i/\log p$ for
$k\in\Z\setminus\{0\}$ with residue
\begin{equation*}
  r_{\chi_{k/2}}:=\sum_{j=-1}^1\frac{\zeta(\chi_{k/2},j/3)+(-1)^k\zeta(\chi_{k/2},uj/3)}{2\chi_{k/2}(1+\chi_{k/2})\log
    p}\Bigl(N+\frac j3\Bigr)^{1+\chi_{k/2}}.
\end{equation*}
Approximating $(N+j/3)^{1+\chi_{k/2}}$ by $N^{1+\chi_{k/2}}$ and correcting the
error yields
\begin{multline}\label{eq:split-Fourier-contribution}
  r_{\chi_{k/2}}=\frac{\sum_{j=-1}^1\left(\zeta(\chi_{k/2},j/3)+(-1)^k\zeta(\chi_{k/2},u j/3)\right)}{2\chi_{k/2}(1+\chi_{k/2})\log
    p}N^{1+\chi_{k/2}}\\+\sum_{j=-1}^1\frac{\zeta(\chi_{k/2},j/3)+(-1)^k\zeta(\chi_{k/2},u j/3)}{2\chi_{k/2}(1+\chi_{k/2})\log
    p}\biggl(\Bigl(N+\frac j3\Bigr)^{1+\chi_{k/2}}-N^{1+\chi_{k/2}}\biggr).
\end{multline}
As $\zeta(s,-1/3)+\zeta(s,0)+\zeta(s,1/3)=3^s\zeta(s)$, the first summand equals
\begin{equation*}
  \frac{1+(-1)^k}2\frac{3^{\chi_{k/2}}\zeta(\chi_{k/2})}{\chi_{k/2}(1+\chi_{k/2})\log
    p}N^{1+\chi_{k/2}},
\end{equation*}
which leads to no contribution for odd $k$. Summing over all $k\neq 0$ yields
\begin{equation*}
  \sum_{k\neq 0}r_{\chi_{k/2}}=N\Phi^{(2)}(\log_pN)+\Psi(N),
\end{equation*}
where the $1$-periodic function $\Phi^{(2)}$ is  defined in
\eqref{eq:Phi-2-definition}---it
corresponds to the first summand in \eqref{eq:split-Fourier-contribution}---and
$\Psi(N)$ is the function defined in Theorems~\ref{theorem:result-case-1} and
\ref{theorem:result-case-2}.

As 
\begin{equation*}
  \Bigl(N+\frac j3\Bigr)^{1+\chi_{k/2}}-N^{1+\chi_{k/2}}=O(\min\{N, |k|\})
\end{equation*}
and 
\begin{equation*}
  \frac{\zeta(\chi_{k/2},j/3)+(-1)^k\zeta(\chi_{k/2},u j/3)}{2\chi_{k/2}(1+\chi_{k/2})\log
    p} =O(|k|^{-3/2}\log|k|)
\end{equation*}
by \eqref{eq:growth-estimates-zeta}, we obtain
\begin{align*}
  \Psi(N)&\ll \sum_{1\le |k|\le N}|k|^{-1/2}\log|k|+\sum_{|k|>N}N|k|^{-3/2}\log|k|\\
  &\ll N^{1/2}\log N +N^{1/2}\log N\ll N^{1/2}\log N,
\end{align*}
as claimed in Theorems~\ref{theorem:result-case-1} and
\ref{theorem:result-case-2}.

As all involved Fourier series converge absolutely and uniformly, the periodic
functions are continuous.

\end{proof}

\section{Remainder Integral}
Finally, we deal with the remainder integral in \eqref{eq:residues}.
It is clear that $R=O(N^{3/4})$, but even better, it can be
computed explicitly. 

\begin{lemma}\label{lemma:remainder}We have
\begin{equation*}
  R:=\frac1{2\pi i}\sum_{j=-1}^1\int_{-\frac14-i\infty}^{-\frac14+i\infty}\Lambda_j(s)\Bigl(N+\frac
  j3\Bigr)^{s+1}\frac{ds}{s(s+1)}=\frac{2 p}{9 (p-1)}.
\end{equation*}
\end{lemma}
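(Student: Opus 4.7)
The approach is to evaluate $R$ by contour deformation, shifting the line of integration past the only pole lying strictly to the left of $\Re s = -1/4$. Since $\Lambda_j(s)$ is analytic throughout $\Re s < 0$---the zeros of $p^{2s}-1$ lying on the imaginary axis and $\zeta(s,\alpha)$ being entire away from $s=1$---the only pole of the integrand in this region is a simple pole at $s = -1$ coming from the factor $1/(s+1)$.

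I would first compute the residue at $s=-1$. Since $(N+j/3)^{s+1}$ evaluates to $1$ at $s=-1$ independently of $N$, the residue of the summand simplifies to $-\Lambda_j(-1)$. Summation over $j$ is then handled by the multiplication identity $\sum_{j=-1}^{1}\zeta(s,j/3)=3^s\zeta(s)$ (and similarly for $\zeta(s,uj/3)$), which collapses the sum to $\sum_j\Lambda_j(s)=3^s\zeta(s)/(p^s-1)$; evaluating at $s=-1$ via $\zeta(-1)=-1/12$ yields $\sum_j\Lambda_j(-1)=p/(36(p-1))$, so that the residue contribution equals $-p/(36(p-1))$. Strikingly, this piece is already independent of $N$, which is consistent with the claim that $R$ itself is a constant.

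The remaining task is to show that the integral on a shifted line $\Re s = c$ with $c<-1$ contributes exactly $p/(4(p-1))$, thereby producing
\[
R=-\frac{p}{36(p-1)}+\frac{p}{4(p-1)}=\frac{2p}{9(p-1)}.
\]
I would pursue this by expanding $1/(p^{2s}-1)=-\sum_{k\geq 0}p^{2ks}$ (valid for $\Re s<0$), substituting into $\Lambda_j$, and evaluating each of the resulting term-by-term integrals of the form $\frac{1}{2\pi i}\int_{c}\zeta(s,\alpha)(Mp^{2k})^s M\,ds/(s(s+1))$ via the explicit Mellin inverses of $X^{s+1}/(s(s+1))$ (namely $(X-t)_+$, $-\min(t,X)$, or $(t-X)_+$, depending on which strip $c$ lies in).

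The main obstacle lies in this last step. The growth bound \eqref{eq:growth-estimates-zeta} shows that on vertical lines $\Re s<-1/2$ the integrand fails to be absolutely integrable in $t$, so neither the termwise interchange of sum and integral in the geometric expansion, nor a naive rectangular contour shift whose horizontal edges at $\Im s=\pm T$ would have to vanish, is justified by elementary estimates. Resolving this will likely require invoking the Hurwitz functional equation to transport $\zeta(s,j/3)$ on $\Re s<0$ into $\zeta(1-s,\cdot)$ on $\Re(1-s)>1$, where the zeta factor is bounded and the accompanying $\Gamma(1-s)$ supplies exponential decay in $|t|$ that restores absolute convergence---at the cost of intricate bookkeeping of $\Gamma$ and trigonometric factors which should, however, ultimately collapse to the clean $N$-independent constant $p/(4(p-1))$.
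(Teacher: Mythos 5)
Your residue computation at $s=-1$ is correct: the only singularity of the integrand in $\Re s<-\frac14$ is the simple pole at $s=-1$, and using $\sum_{j=-1}^{1}\zeta(s,j/3)=3^s\zeta(s)$ one gets $\sum_j\Lambda_j(-1)=\frac{3^{-1}\zeta(-1)}{p^{-1}-1}=\frac{p}{36(p-1)}$, hence a residue contribution of $-\frac{p}{36(p-1)}$. But this observation carries essentially none of the content of the lemma: the entire burden is transferred to the claim that the integral on a line $\Re s=c<-1$ equals exactly $\frac{p}{4(p-1)}$, and that claim is asserted, not proved. As you yourself note, on such a line the integrand is only $O(|t|^{-3/2-c}\log|t|)$, so the integral is not absolutely convergent and the termwise geometric expansion is not justified there; more fundamentally, the Dirichlet series for $\zeta(s,\alpha)$ does not converge for $\Re s<1$, so the ``explicit Mellin inverses of $X^{s+1}/(s(s+1))$'' cannot be applied term by term without first passing through the Hurwitz functional equation. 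That step is left entirely as a sketch (``should ultimately collapse''), and since the shifted integral manifestly does \emph{not} vanish (otherwise $R$ would be negative), there is no shortcut: the decisive identity remains unproven.

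The paper's proof goes in the opposite direction, which avoids every one of these obstacles. On the original line $\Re s=-\frac14$ one has $|p^{2s}|=p^{-1/2}<1$, so $\frac{1}{p^{2s}-1}=-\sum_{\ell\ge0}p^{2\ell s}$ converges absolutely and may be interchanged with the absolutely convergent integral. Each resulting term is then shifted \emph{right} to $\Re s=2$, collecting residues at $s=0$ and $s=1$; on that line the Mellin--Perron formula applies directly and evaluates each integral as an explicit finite arithmetic sum of the form $\sum(X-n)$, a polynomial in $N$. Summing over $j$ and $\ell$, all $N$-dependence cancels and one is left with $\sum_{\ell\ge0}\frac{2}{9}p^{-2\ell-1}(p+1)=\frac{2p}{9(p-1)}$. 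If you want to salvage your left-shift route you would need to actually execute the functional-equation computation and justify the conditional convergence of the shifted integral; as written, the proof has a genuine gap at its central step.
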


\begin{proof}
Expanding the denominator $p^{2s}-1$ in a geometric series
yields
\begin{equation*}
  R=-\frac{1}{2\pi i}\sum_{j=-1}^1 \sum_{\ell\ge
    0}\int_{-\frac14-i\infty}^{-\frac 14+i\infty} \Bigl(\zeta\Bigl(s,\frac
  j3\Bigr)+p^s\zeta\Bigl(s,\frac {u j}3\Bigr)\Bigr) p^{2\ell s}\Bigl(N+\frac j3\Bigr)^{s+1}\frac{ds}{s(s+1)},
\end{equation*}
where exchanging summation and integration was legitimate due to absolute
convergence.

Shifting the line of integration back to $\Re s=2$ and taking the residues in
the simple poles at $s=0$ and $s=1$ into account yields
\begin{multline*}
  R=\sum_{\ell\ge0}\biggl( -N +\frac{u+1}9+(p+1)p^{2\ell}\Bigl(\frac32N^2+\frac19\Bigr)\\-
\sum_{j=-1}^1 
\frac{1}{2\pi i}\int_{2-i\infty}^{2+i\infty} \Bigl(\zeta\Bigl(s,\frac
  j3\Bigr)+p^s\zeta\Bigl(s,\frac {u j}3\Bigr)\Bigr) p^{2\ell s}\Bigl(N+\frac j3\Bigr)^{s+1}\frac{ds}{s(s+1)}
\biggr). 
\end{multline*}
We have
\begin{multline*}
  \frac1{2\pi
    i}\int_{2-i\infty}^{2+i\infty}\zeta\Bigl(s,\frac{v j}3\Bigr)p^{ms}\Bigl(N+\frac
  j3\Bigr)^{s+1}\frac{ds}{s(s+1)}\\
  \begin{aligned}
    &=\frac{p^{-m}}{2\pi
    i}\int_{2-i\infty}^{2+i\infty}\zeta\Bigl(s,\frac{v j}3\Bigr)\Bigl(p^m N+\frac
  {(p^m-v)j}3+\frac {v j}3\Bigr)^{s+1}\frac{ds}{s(s+1)}\\
  &=p^{-m}\sum_{-v j/3<n<p^m N+(p^m-v)j/3}\Bigl(p^m N+\frac{(p^m-v)j}3-n\Bigr)
\end{aligned}
\end{multline*}
by the Mellin-Perron summation formula again. In the cases of interest, we have
either $v=u$ and $m$ odd or $v=1$ and $m$ even, which implies that $p^m-v$ is
divisible by $3$ in our cases. Thus we obtain
\begin{multline*}
    \frac1{2\pi
    i}\int_{2-i\infty}^{2+i\infty}\zeta\Bigl(s,\frac{v j}3\Bigr)p^{ms}\Bigl(N+\frac
  j3\Bigr)^{s+1}\frac{ds}{s(s+1)}\\
  =p^{-m}\frac{(p^m N +(p^m-v)j/3+\ceiling{v j/3}-1)(p^m N +(p^m-v)j/3+\ceiling{v j/3})}{2}.
\end{multline*}
Summing over $j$ yields
\begin{equation*}
  R=\sum_{\ell\ge 0}\frac{2}{9} p^{-2 \ell -1} (p+1)=\frac{2 p}{9 (p-1)}.
\end{equation*}
\end{proof}

Combining \eqref{simplified}, Lemma~\ref{lemma:Mellin-Perron-result},
Lemma~\ref{lemma:residues} and \ref{lemma:remainder} yields the results given
in Theorems~\ref{theorem:result-case-1} and \ref{theorem:result-case-2}.

\bibliographystyle{amsplain}
\bibliography{asm-divisibility-analysis}

\end{document}